\lstdefinelanguage{Mathematica}{
  morekeywords={ForAll, Resolve, Reals, Sqrt},
  sensitive=true,
  morecomment=[l]{(*},
  morecomment=[r]{*)},
  morestring=[b]{"}
}
\pgfplotsset{compat=1.18}
\renewcommand{\le}{\leqslant}
\renewcommand{\ge}{\geqslant}
\title[Dimension-dependent bounds for the SDIEP]{Dimension-dependent bounds for the SDIEP via phase optimisation and Paley-type constructions}
\author{Tomasz Kania}
\address{Institute of Mathematics, Czech Academy of Sciences, \v{Z}itn\'{a} 25, 115~67 Prague 1, Czech Republic \& Institute of Mathematics, Jagiellonian University, {\L}ojasiewicza 6, 30-348 Kraków, Poland}
\email{tomasz.marcin.kania@gmail.com}
\subjclass[2020]{15B51 (primary), 15A18, 15B48, 05B20, 05C50, 60J10 (secondary).}
\keywords{Symmetric doubly stochastic inverse eigenvalue problem, Sule\u{\i}manova spectrum, cycle basis, phase optimisation, coherence, Walsh--Hadamard matrix, Paley construction}
\dedicatory{In memoriam: Miroslav Fiedler (1926--2015),\\
on the occasion of the tenth anniversary of his passing}
\date{\today}
\thanks{IM CAS (RVO 67985840). }
\newtheorem{thm}{Theorem}
\newtheorem{prop}[thm]{Proposition}
\newtheorem{lem}[thm]{Lemma}
\newtheorem{cor}[thm]{Corollary}
\theoremstyle{remark}
\newtheorem{rem}[thm]{Remark}
\newcommand{\1}{\mathbf{1}}
\newcommand{\Lam}{\Lambda}
\begin{document}
\maketitle

\begin{abstract}
We refine the cycle-walk (Fourier) template of Gnacik and the author to quantify when a~$\delta$-Sule\u{\i}manova spectrum $(1,\lambda_2,\dots,\lambda_n)$ (with $\lambda_j\le 0$) is realised by a symmetric doubly stochastic matrix. For the canonical cycle basis we compute the \emph{exact} size-dependent threshold
\[
\delta_n \;=\; 1-\frac{1}{2\cos^2\!\Big(\frac{\pi}{4n}\rho(n)\Big)},
\quad
\rho(n)\in\{0,1,2,4\}\ \text{determined by } n\bmod 8,
\]
which improves $1/2$ if and only if $8\nmid n$; we also prove sharpness for that template. We then introduce an \emph{optimally phase-aligned} cycle basis which removes the `$8\mid n$' artefact and yields better sufficient bound
\[
\delta_n^{\rm (ph)} \;=\;
\begin{cases}
\displaystyle 1-\dfrac{1}{2\cos^2(\pi/n)},   & n\equiv 0\pmod{4},\\[2mm]
\displaystyle 1-\dfrac{1}{2\cos^2(\pi/2n)},  & n\equiv 2\pmod{4},\\[2mm]
\displaystyle 1-\dfrac{1}{2\cos^2(\pi/4n)},  & n\ \text{odd},
\end{cases}
\]
so that $\delta_n^{\rm (ph)}<\tfrac12$ for \emph{every} $n\ge3$ and $\delta_n^{\rm (ph)}=\delta_n$ unless $8\mid n$. Next, on abelian $2$-groups, the Walsh--Hadamard basis has coherence $M=1$ and hence suffices for \emph{all} Sule\u{\i}manova lists ($\delta=0$); the same conclusion holds in every Hadamard order (\emph{e.g.}, Paley families).
\end{abstract}

\section{Introduction}

A real square matrix is called doubly stochastic if it has non-negative entries and all of its row and column sums are equal to one. The Symmetric Doubly Stochastic Inverse Eigenvalue Problem (SDIEP) asks for the necessary and sufficient conditions under which a given list of real numbers $\sigma = (\lambda_1, \lambda_2, \dots, \lambda_n)$ can be the spectrum of an $n \times n$ symmetric doubly stochastic matrix \cite{GK20}. By the Perron--Frobenius theorem, any such matrix must have a Perron eigenvalue of $1$, corresponding to the eigenvector of all ones. This fixes $\lambda_1=1$ and, by Gershgorin's circle theorem, bounds all other eigenvalues to the interval $[-1, 1]$.\smallskip

A significant body of research focuses on the realisability of a \emph{Sule\u{\i}manova spectrum}, a list of the form $(1, \lambda_2, \dots, \lambda_n)$ where each $\lambda_j \le 0$ for $j \ge 2$ and the trace is non-negative, $1 + \sum_{j=2}^n \lambda_j \ge 0$. A powerful constructive method for tackling this problem is the Schur template: given a diagonal matrix $\Lambda = \operatorname{diag}(1, \lambda_2, \dots, \lambda_n)$, one constructs a candidate matrix $P(\Lambda) = Q\Lambda Q^\top$. If $Q$ is an orthogonal matrix whose first column is the Perron eigenvector $q_0 = \mathbf{1}/\sqrt{n}$, the resulting matrix $P(\Lambda)$ is automatically symmetric, doubly stochastic, and has the desired spectrum. The entire problem is thus reduced to a single, challenging question: for a given spectrum $\Lambda$ and a chosen basis $Q$, what conditions guarantee that $P(\Lambda)$ is entrywise non-negative?\smallskip

The papers \cite{Perfect} and \cite{PM65} seem to be foundational for the problem. Recent work has approached this question from several angles. One approach provides general sufficient conditions that depend only on the sum of the non-Perron eigenvalues. A key result in this vein was provided by Gnacik and the author \cite{GK20}, who used the eigenbasis of the $n$-cycle graph for $Q$ to show that any Sule\u{\i}manova spectrum is realisable if $1 + \sum_{j=2}^n \lambda_j > 1/2$. It was noted, however, that this bound appeared to be sharp for this basis, particularly in dimensions where $n$ is a multiple of 8. A different approach, seen in the work of Sarkhoni \emph{et al}.~\cite{Sarkhoni25}, forgoes general sum-based conditions and instead derives necessary and sufficient conditions for highly structured spectra, such as $(1, 0, \dots, 0, \lambda_n)$, by constructing a bespoke orthogonal matrix $Q$ tailored to that specific list.\smallskip

In this paper, we unify and advance these perspectives. We begin by revisiting the cycle-walk basis from \cite{GK20}, replacing their baseline bound with a precise, dimension-dependent threshold that confirms the `$8 \mid n$' barrier is an artefact of the basis, not the problem itself. We then resolve this issue by introducing an optimally phase-aligned cycle basis that yields an improved bound, strictly better than $1/2$ for all dimensions $n \ge 3$. Finally, we generalise the underlying principle to a criterion based on basis \emph{coherence}, showing that bases derived from abelian 2-groups (Walsh--Hadamard matrices) achieve the ideal coherence, allowing for the realisation of \emph{all} Sule\u{\i}manova spectra with a strictly positive trace sum.\smallskip

Let $\sigma=(1,\lambda_2,\ldots,\lambda_n)$ with $\lambda_j\in[-1,0]$ for $j\ge 2$. Following Paparella \cite{Paparella16}, we call $\sigma$ a~\emph{Sule\u{\i}manova spectrum} and say it is \emph{$\delta$-Sule\u{\i}manova} if
\[
1+\sum_{j=2}^n\lambda_j>\delta .
\]

In \cite{GK20} a Markov-chain-inspired template was introduced: take $Q=[w_0,\ldots,w_{n-1}]$ to be a real orthonormal eigenbasis of the simple random walk on the $n$-cycle, so $w_0=\1/\sqrt n$ and, for $k\ge 1$,
\[
w_k(j)=\sqrt{\tfrac{2}{n}}\;\sin\!\Big(\tfrac{2\pi kj}{n}+\tfrac{\pi}{4}\Big)\quad (j=0,\ldots,n-1).
\]
Setting $P(\Lam)=Q\,\mathrm{diag}(1,\lambda_1,\ldots,\lambda_{n-1})\,Q^\top$ gives a symmetric matrix with unit row sums. Using $S_j(k):=\sin\!\big(\tfrac{2\pi kj}{n}+\tfrac{\pi}{4}\big)$ one has
\[
p_{kl}=\frac1n\!\left(1+2\sum_{j=1}^{n-1}\lambda_j\,S_j(k)\,S_j(l)\right).
\]
The crude estimate $S_j(k)S_j(l)\le 1$ and $\lambda_j\le 0$ yields the sufficient condition $1+\sum_{j=2}^n\lambda_j>\tfrac12$. It was also observed that when $8\mid n$ there are $j,k$ with $S_j(k)=1$, so this estimate cannot beat $\tfrac12$ for those sizes, and exhibited a $5\times 5$ spectrum at sum $0.48$ violating entrywise positivity. This suggests  trying other orthogonal choices of $Q$ to reduce the worst interaction $n\,|q_j(k)q_j(l)|$ (all from \cite{GK20}).

\medskip
In the present paper, we keep the Schur template $P(\Lam)=Q\Lam Q^\top$ and:
\begin{itemize}
\item compute \emph{exactly} the worst coefficient for the canonical cycle basis, obtaining a closed-form $\delta_n$ that improves $\tfrac12$ iff $8\nmid n$, with a matching sharpness example;
\item introduce an \emph{optimally phase-aligned} cycle basis (still orthogonal, same Perron column) giving the \emph{correct} piecewise bound $\delta_n^{\rm (ph)}<\tfrac12$ for every $n\ge3$, which strictly improves the $8\mid n$ case and is optimal within the cycle family;
\item prove a general `coherence $\Rightarrow$ sufficient $\delta$' lemma and apply it to Walsh--Hadamard bases (abelian $2$-groups) and Hadamard orders (via Paley), obtaining \emph{$\delta=0$ in those orders only}. Here `$\delta=0$' means the \emph{strict} condition $1+\sum\lambda_j>0$ suffices; this does not contradict obstructions on the boundary $1+\sum\lambda_j=0$ (\emph{e.g.}, $(1,0,\dots,0,-1)$ in odd dimensions).
\end{itemize}

\section{The cycle-walk basis}\label{sec:cycle}
From now on we reindex the list as $\sigma=(\lambda_0,\lambda_1,\ldots,\lambda_{n-1})$ with $\lambda_0=1$ and $\lambda_j\in[-1,0]$ for $1\le j\le n-1$, and use the cycle eigenbasis $Q=[w_0,\dots,w_{n-1}]$ described above.

\begin{lem}\label{lem:entrybound}
Let $Q$ and $\Lam$ be as above. Then
\begin{equation}\label{eq:entry}
p_{kl}
=\frac1n\!\left(1+2\sum_{j=1}^{n-1}\lambda_j\,S_j(k)\,S_j(l)\right),
\quad
S_j(k):=\sin\!\Big(\tfrac{2\pi kj}{n}+\tfrac{\pi}{4}\Big).
\end{equation}
If $\lambda_j\le 0$, writing
\[
c_n:=\max_{1\le j\le n-1}\ \max_{0\le k,l\le n-1} S_j(k)\,S_j(l),
\]
we have the bound
\begin{equation}\label{eq:lowerbd}
p_{kl}\ \ge\ \frac1n\Bigl(1+2c_n\sum_{j=1}^{n-1}\lambda_j\Bigr)\qquad(0 \le k, l \le n-1).
\end{equation}
\end{lem}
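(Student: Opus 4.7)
The plan is a direct two-step calculation on the Schur template. First I would expand the entries of $P(\Lam)=Q\Lam Q^\top$ column-by-column, writing
\[
p_{kl}=\sum_{j=0}^{n-1}\lambda_j\,w_j(k)\,w_j(l),
\]
and separate out the Perron summand. Since $\lambda_0=1$ and $w_0=\1/\sqrt n$, the $j=0$ term contributes exactly $1/n$, independent of $k,l$. For $j\ge 1$ the substitution $w_j(i)=\sqrt{2/n}\,S_j(i)$ turns each remaining summand into $(2/n)\lambda_j S_j(k)S_j(l)$; pulling the common prefactor $1/n$ outside gives the identity \eqref{eq:entry}.

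For the lower bound \eqref{eq:lowerbd} the only observation required is that multiplication by a non-positive scalar reverses inequalities. By the definition of $c_n$ one has $S_j(k)S_j(l)\le c_n$ for every admissible triple $(j,k,l)$, so the hypothesis $\lambda_j\le 0$ gives the termwise estimate $\lambda_j S_j(k)S_j(l)\ge c_n\lambda_j$. Summing over $j=1,\dots,n-1$ and substituting back into \eqref{eq:entry} produces \eqref{eq:lowerbd}.

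There is no genuine obstacle in the proof: it is bookkeeping built around the identities defining $Q$ and around a single sign flip. Two small points deserve to be isolated to keep the exposition clean. The first is that the Perron term must be pulled out \emph{before} the sine substitution is made, since $w_0$ is not of the form $\sqrt{2/n}\sin(\cdot)$; this is what produces the ``1'' inside the parentheses and underlies the whole ``baseline plus negative correction'' shape of the template. The second is that the estimate is worded as a \emph{lower} bound, not an upper one, because $\sum_{j\ge 1}\lambda_j\le 0$ and one eventually needs $p_{kl}\ge 0$; this aligns the inequality with the non-negativity test that follows in later sections, where a sharp value of $c_n$ will be computed and used in place of the trivial bound $c_n\le 1$.
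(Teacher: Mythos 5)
Your argument is correct and coincides with the paper's own proof: expand $p_{kl}=\sum_j \lambda_j w_j(k)w_j(l)$, peel off the $j=0$ Perron term to produce the $1/n$, substitute $w_j=\sqrt{2/n}\,S_j$ for $j\ge1$, and then use that multiplying $S_j(k)S_j(l)\le c_n$ by $\lambda_j\le0$ reverses the inequality. The only difference is expository: you spell out the Perron separation and the sign-flip more explicitly than the paper does, which is fine.
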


\begin{proof}
Orthogonality and $w_0=\1/\sqrt n$ give $P(\Lam)\1=\1$. Expanding $Q\Lam Q^\top$ yields \eqref{eq:entry}. Because $\lambda_j\le0$ and $S_j(k)S_j(l)\le c_n$ for each $j$, multiplying the latter inequality by $\lambda_j$ \emph{reverses} it, hence
\(
\sum_{j=1}^{n-1}\lambda_j S_j(k)S_j(l)\ \ge\ c_n\sum_{j=1}^{n-1}\lambda_j,
\)
and \eqref{eq:lowerbd} follows.
\end{proof}

\begin{lem}[Exact value of $c_n$ and the mod-$8$ mechanism]\label{lem:cn}
Define
\[
\Delta_n \;:=\; \min_{m,t\in\mathbb Z}\left|\frac{2\pi m}{n}-\frac{(2t+1)\pi}{4}\right|.
\]
Then
\[
c_n=\cos^2\Delta_n
\quad\text{and}\quad
\Delta_n=\frac{\pi}{4n}\,\rho(n),
\qquad
\rho(n)=
\begin{cases}
0,& n\equiv0\pmod{8},\\
1,& n\equiv1,3,5,7\pmod{8},\\
2,& n\equiv2,6\pmod{8},\\
4,& n\equiv4\pmod{8}.
\end{cases}
\]
Moreover, the maximum in $c_n$ is attained with $k=l$.
\end{lem}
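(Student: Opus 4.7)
The plan is to collapse the three-parameter maximum in the definition of $c_n$ to a single residue $m\in\mathbb Z/n$, translate it into a minimum-distance problem on $\mathbb R/2\pi\mathbb Z$, and finally evaluate that distance by splitting on $n\bmod 8$.

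First I would use the fact that $S_j(k)$ depends only on $jk\bmod n$. For each fixed $j$, monotonicity of $x\mapsto x^2$ in $|x|$ gives $\max_{k,l}S_j(k)S_j(l)=(\max_k|S_j(k)|)^2$, realised at $k=l$ chosen as the signed extremum of $S_j$; this already yields the final assertion of the lemma. Since $j=1$ alone hits every residue class of $jk\bmod n$, the remaining maximum collapses to
\[
c_n=\max_{m\in\mathbb Z/n}\sin^2\!\bigl(2\pi m/n+\pi/4\bigr).
\]

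Next I would convert this $\sin^2$-maximum into a minimum distance. The identity $\sin^2\theta=\cos^2(\theta-(2s+1)\pi/2)$ (valid for every $s\in\mathbb Z$) shows that the maximum equals $\cos^2$ of the least distance from $\theta=2\pi m/n+\pi/4$ to the set of odd multiples of $\pi/2$. Subtracting $\pi/4$ yields
\[
\Delta_n^{\ast}:=\min_{m,s\in\mathbb Z}\bigl|2\pi m/n-(4s+1)\pi/4\bigr|.
\]
The reflection $m\mapsto -m$ preserves the orbit $\{2\pi m/n\}\subset\mathbb R/2\pi\mathbb Z$ while swapping the cosets $(4s+1)\pi/4$ and $(4s+3)\pi/4$; their union is $(2t+1)\pi/4$, so $\Delta_n^{\ast}=\Delta_n$ as defined in the statement, and $c_n=\cos^2\Delta_n$.

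Finally I would evaluate $\Delta_n$ by scaling: multiplying its definition by $4n/\pi$ turns it into $\min_{m,t\in\mathbb Z}\bigl|8m-(2t+1)n\bigr|$. With $d:=\gcd(8,2n)=2\gcd(4,n)$, the numbers $8m-2tn$ sweep $d\mathbb Z$, so the minimum is simply the distance from $n$ to $d\mathbb Z$. The four residue patterns $n\equiv 0,\,4,\,\{2,6\},\,\{1,3,5,7\}\pmod 8$ give $d=8,8,4,2$ and minimum distances $0,4,2,1$, producing $\rho(n)\in\{0,4,2,1\}$ respectively and the claimed $\Delta_n=(\pi/4n)\rho(n)$. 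The main subtlety I anticipate lies in the second step: reconciling the target $(4s+1)\pi/4$ arising directly from $|\sin|=1$ with the paper's larger family $(2t+1)\pi/4$. Once the $m\mapsto -m$ symmetry of $\mathbb Z/n$ is noted, this is a one-line observation, and the rest reduces to routine $\gcd$-arithmetic.
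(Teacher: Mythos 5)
Your proposal is correct and follows the same overall route as the paper (reduce to the full grid via a coprime $j$, observe the maximum is a square and is hit at $k=l$, convert the extremal $\sin^2$ to a distance to the peaks, then do arithmetic on $\min_{m,t}|8m-(2t+1)n|$), but two of your steps are tightened in a worthwhile way. First, you explicitly reconcile the family $(4s+1)\pi/4$ (which is what literally arises from requiring $|\sin(2\pi m/n+\pi/4)|=1$) with the family $(2t+1)\pi/4$ appearing in the lemma's definition of $\Delta_n$, via the reflection $m\mapsto -m$. The paper's proof elides this by calling $\Delta_n$ ``the minimal angular distance from the grid to the odd half-angles'' and writing it directly in the $(2t+1)\pi/4$ form; the two minima do coincide, but the identification deserves the one-line symmetry argument you supply. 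Second, in evaluating $\min_{m,t}|8m-(2t+1)n|$ the paper argues residue class by residue class with Bézout and parity considerations, whereas you observe that $\{8m-2tn\}=\gcd(8,2n)\mathbb Z=d\mathbb Z$ with $d=2\gcd(4,n)$, so the minimum is simply $\operatorname{dist}(n,d\mathbb Z)$; the case split on $n\bmod 8$ then reads off immediately. This buys a more uniform and less error-prone computation, though the content is the same. One small point worth making explicit in your write-up: the reduction ``$j=1$ alone hits every residue'' gives $c_n\ge\max_m\sin^2(2\pi m/n+\pi/4)$, and you should add (as you tacitly assume) that for every other $j$ the orbit $\{jk\bmod n\}$ is a \emph{subset} of $\{0,\dots,n-1\}$, so no $j$ can exceed that maximum — this is also left implicit in the paper.
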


\begin{proof}
For fixed $j$ with $\gcd(j,n)=1$, the multiset $\{2\pi kj/n+\pi/4:\ 0\le k\le n-1\}$ is a permutation of the grid $\{2\pi m/n+\pi/4:\ 0\le m\le n-1\}$. Hence
\[
\max_k|S_j(k)|=\max_m\Big|\sin\Big(\frac{2\pi m}{n}+\frac{\pi}{4}\Big)\Big|
=\sin\!\Big(\frac{\pi}{2}-\Delta_n\Big)=\cos\Delta_n,
\]
where $\Delta_n$ is the minimal angular distance from this grid to the odd half-angles. Therefore $c_n=(\max_{j,k}|S_j(k)|)^2=\cos^2\Delta_n$, and the maximum occurs already for $k=l$ at a peak. As
\[
\frac{2\pi m}{n}-\frac{(2t+1)\pi}{4}=\frac{\pi}{4n}\big(8m-(2t+1)n\big)
\]
implies $\Delta_n=\tfrac{\pi}{4n}\min_{m,t}|\,8m-(2t+1)n\,|$. We now analyse residues of $n$ modulo $8$.
\begin{itemize}
\item {$n\equiv0\pmod 8$.}
Then $(2t+1)n$ is divisible by $8$ for every $t$, so choosing $m=(2t+1)n/8$ gives the minimum $0$. Thus $\rho(n)=0$.

\item {$n$ odd.}
Here $\gcd(n,8)=1$. By Bézout's theorem, there exist integers $m,t$ with $8m-(2t+1)n=1$. Hence, the minimum positive value is $1$ and $\rho(n)=1$.

\item {$n\equiv 2,6\pmod 8$.}
Write $n=2k$ with $k$ odd. Then $8m-(2t+1)n\equiv 8m-2(2t+1)k\equiv 0\ (\mathrm{mod}\ 2)$ but never $0\ (\mathrm{mod}\ 4)$. Hence, the minimum positive value is $2$ and $\rho(n)=2$.

\item {$n\equiv4\pmod 8$.}
Write $n=4k$ with $k$ odd. Then $8m-(2t+1)n\equiv 8m-4(2t+1)k\equiv 0\ (\mathrm{mod}\ 4)$ but never $0\ (\mathrm{mod}\ 8)$. The minimum positive value is $4$ and $\rho(n)=4$.
\end{itemize}
\end{proof}

\begin{thm}[Dimension-dependent bound for the cycle basis]\label{thm:cycle}
With $\Delta_n$ as above, set
\[
\delta_n:=1-\frac{1}{2\cos^2\Delta_n}
\ =\ \frac12-\frac12\tan^2\Delta_n.
\]
If $1+\sum_{j=2}^n\lambda_j>\delta_n$, then $P(\Lam)=Q\Lam Q^\top$ is symmetric and doubly stochastic. In particular $\delta_n<\tfrac12$ if and only if $8\nmid n$, whereas $\delta_n=\tfrac12$ when $8\mid n$.
\end{thm}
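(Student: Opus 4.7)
The theorem is essentially a packaging of Lemmas~\ref{lem:entrybound} and~\ref{lem:cn}, so the plan is to combine them by one elementary rearrangement and then read off the case distinction from the arithmetic already performed in Lemma~\ref{lem:cn}. Concretely, I would invoke Lemma~\ref{lem:entrybound} together with the identity $c_n=\cos^2\Delta_n$ to obtain the uniform lower bound
\[
p_{kl}\;\ge\;\frac{1}{n}\Bigl(1+2\cos^2(\Delta_n)\sum_{j=1}^{n-1}\lambda_j\Bigr),\qquad 0\le k,l\le n-1.
\]
The hypothesis $1+\sum_{j=1}^{n-1}\lambda_j>\delta_n=1-\tfrac{1}{2\cos^2\Delta_n}$ is equivalent to $\sum_{j=1}^{n-1}\lambda_j>-\tfrac{1}{2\cos^2\Delta_n}$, and multiplying through by the positive constant $2\cos^2\Delta_n$ converts it into $1+2\cos^2(\Delta_n)\sum_j\lambda_j>0$; hence every $p_{kl}$ is strictly positive. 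Symmetry and unit row/column sums are built into the Schur template (orthogonality of $Q$ and $w_0=\1/\sqrt n$), already exploited in Lemma~\ref{lem:entrybound}, so $P(\Lam)$ is doubly stochastic.

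For the closed-form case distinction, I would apply $1/\cos^2 x=1+\tan^2 x$ to rewrite $\delta_n=\tfrac12-\tfrac12\tan^2\Delta_n$; this makes transparent that $\delta_n<\tfrac12$ if and only if $\Delta_n>0$. Lemma~\ref{lem:cn} pinpoints exactly when this fails: $\Delta_n=\tfrac{\pi}{4n}\rho(n)$, with $\rho(n)=0$ precisely when $8\mid n$. Thus $8\mid n$ forces $\Delta_n=0$, $\cos^2\Delta_n=1$, and $\delta_n=\tfrac12$, whereas in every other residue class $\rho(n)\in\{1,2,4\}$ forces $\Delta_n>0$ and $\delta_n<\tfrac12$.

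I expect no genuine obstacle here: the arithmetic content resides entirely in Lemma~\ref{lem:cn}, and the sign-sensitive reversal when handling $\lambda_j\le 0$ has already been absorbed into Lemma~\ref{lem:entrybound}. The only step requiring a moment of attention is the passage between the bound $\sum_j\lambda_j>-1/(2\cos^2\Delta_n)$ and the stated threshold on $1+\sum_j\lambda_j$, but this is a single algebraic rearrangement rather than any substantive analytic work.
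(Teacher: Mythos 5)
Your proposal is correct and follows essentially the same route as the paper: combine Lemma~\ref{lem:entrybound} with $c_n=\cos^2\Delta_n$ from Lemma~\ref{lem:cn}, rearrange the hypothesis to get entrywise nonnegativity, appeal to the Schur template for symmetry and row/column sums, and read the mod-$8$ case distinction off Lemma~\ref{lem:cn}. The only cosmetic difference is that you wave at the Schur-template verification of symmetry and unit row/column sums while the paper spells it out line by line.
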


\begin{proof}
By Lemma~\ref{lem:entrybound}, for every $k,l$,
\[
p_{kl}\ \ge\ \frac1n\Bigl(1+2c_n\sum_{j=1}^{n-1}\lambda_j\Bigr),
\qquad c_n=\cos^2\Delta_n.
\]
by Lemma~\ref{lem:cn}. Hence $p_{kl}\ge0$ whenever $\sum_{j=1}^{n-1}\lambda_j>-1/(2c_n)$, i.e.,\ when
\[
1+\sum_{j=2}^n\lambda_j\ >\ 1-\frac{1}{2\cos^2\Delta_n}\ =\ \delta_n,
\]
which establishes entrywise nonnegativity under the stated hypothesis.

\smallskip
Since $Q$ is orthogonal and $\Lambda$ is (real and) diagonal (hence $\Lambda^\top=\Lambda$),
\[
P(\Lambda)^\top=(Q\Lambda Q^\top)^\top=(Q^\top)^\top\Lambda^\top Q^\top=Q\Lambda Q^\top=P(\Lambda),
\]
so $P(\Lambda)$ is symmetric.

\smallskip
Let $q_0,\dots,q_{n-1}$ be the columns of $Q$ and let $e_0=(1,0,\dots,0)^\top$. Since $q_0=\1/\sqrt n$ and $q_j\perp q_0$ for $j\ge1$,
\[
Q^\top\1=\big(\langle q_0,\1\rangle,\dots,\langle q_{n-1},\1\rangle\big)^\top=(\sqrt n,0,\dots,0)^\top=\sqrt n\,e_0.
\]
Therefore
\[
P(\Lambda)\,\1
=Q\Lambda Q^\top\1
=Q\Lambda(\sqrt n\,e_0)
=\sqrt n\,Q(\Lambda e_0)
=\sqrt n\,Q e_0
=\sqrt n\,q_0
=\sqrt n\,\frac{\1}{\sqrt n}
=\1.
\]
Thus every row sum equals $1$. Finally,
\[
\1^\top P(\Lambda)
=\1^\top Q\Lambda Q^\top
=(Q^\top\1)^\top \Lambda Q^\top
=(\sqrt n\,e_0)^\top \Lambda Q^\top
\]
\[
=\sqrt n\,e_0^\top \Lambda Q^\top
=\sqrt n\,e_0^\top Q^\top
=(\sqrt n\,Q e_0)^\top
=\1^\top,
\]
where we used that $\Lambda e_0=e_0$ because the first diagonal entry of $\Lambda$ is $\lambda_0=1$. Hence, every column sum also equals $1$.

Combining the three parts shows that $P(\Lambda)$ is symmetric and doubly stochastic, whenever $1+\sum_{j=2}^n\lambda_j>\delta_n$. The stated equivalence for $\delta_n<\tfrac12$ versus $8\nmid n$ follows from Lemma~\ref{lem:cn}, which gives $c_n=\cos^2\Delta_n=1$ iff $8\mid n$.
\end{proof}

\begin{prop}[Sharpness for the cycle basis]\label{prop:sharp}
Fix $n$ and choose indices realising 
\[
    c_n=\max_{j,k,l}S_j(k)S_j(l).
\]
If $\lambda_{j^{\ast}}=t<0$ and $\lambda_j=0$ for $j\ne j^{\ast}$, then
\[
p_{k^{\ast}k^{\ast}}=\frac1n\bigl(1+2t\,\cos^2\Delta_n\bigr),
\]
which is non-negative if and only if $t\ge -1/(2\cos^2\Delta_n)$, i.e.\ $1+\sum_{j=2}^n\lambda_j\ge\delta_n$. Thus Theorem~\ref{thm:cycle} is optimal for this $Q$ among conditions depending only on $\sum_{j\ge 2}\lambda_j$.
\end{prop}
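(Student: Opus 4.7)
The plan is to test the Schur template on a one-parameter spectrum that concentrates all deficit on the extremal index identified in Lemma~\ref{lem:cn}. That lemma supplies a triple $(j^\ast,k^\ast,l^\ast)$ realising $c_n$ and, crucially, asserts that the maximum is attained with $k^\ast=l^\ast$; fix such a choice. Taking the spectrum with $\lambda_{j^\ast}=t\in[-1,0)$ and $\lambda_j=0$ for every other $j\ge 1$, formula \eqref{eq:entry} collapses to a single summand, giving
\[
p_{k^\ast k^\ast}=\frac1n\bigl(1+2t\,S_{j^\ast}(k^\ast)^2\bigr)=\frac1n\bigl(1+2t\cos^2\Delta_n\bigr),
\]
since $S_{j^\ast}(k^\ast)^2=c_n=\cos^2\Delta_n$ at the maximizer.

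Next I would solve $p_{k^\ast k^\ast}\ge 0$: for $t<0$ this rearranges to $t\ge -1/(2\cos^2\Delta_n)$. Because only one off-Perron eigenvalue is nonzero, the trace sum equals $1+t$, so the inequality is precisely $1+\sum_{j=2}^n\lambda_j\ge\delta_n$. Conversely, choosing $t$ strictly below this threshold produces a strictly negative $p_{k^\ast k^\ast}$, so no sufficient condition of the form ``$1+\sum_{j\ge2}\lambda_j>\delta$'' with $\delta<\delta_n$ can work for this $Q$; this matches Theorem~\ref{thm:cycle} at the boundary and pins down optimality within sum-based conditions.

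I do not anticipate a serious obstacle here, as the substantive content has already been discharged in Lemma~\ref{lem:cn}: both the closed form for $c_n$ and the diagonal attainment $k^\ast=l^\ast$ come pre-packaged, so the witnessing entry is automatically a genuine $p_{k^\ast k^\ast}$ rather than an off-diagonal coefficient whose sign might be disciplined by some hidden symmetry. The only housekeeping is to confirm that the extremal $t=-1/(2\cos^2\Delta_n)$ lies in $[-1,0]$, which follows from $\Delta_n\le\pi/4$ for $n\ge 3$ (so $\cos^2\Delta_n\ge\tfrac12$); this ensures the one-parameter witness is a bona fide Sule\u{\i}manova spectrum and not an inadmissible list.
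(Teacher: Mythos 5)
Your argument is correct and follows the paper's reasoning exactly: the proposition is essentially self-proving, reducing \eqref{eq:entry} to a single term at the extremal diagonal index $k^\ast=l^\ast$ guaranteed by Lemma~\ref{lem:cn}, and then reading off the threshold. Your added check that the boundary value $t=-1/(2\cos^2\Delta_n)$ lies in $[-1,0]$ (via $\Delta_n\le\pi/4$, with equality only at $n=4$) is a worthwhile piece of housekeeping the paper leaves implicit, confirming the witness is a legitimate Sule\u{\i}manova list.
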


\begin{rem}We see that divisibility by $8$ is the exact barrier. Indeed, by Lemma~\ref{lem:cn}, the grid $\{2\pi m/n+\pi/4\}$ hits an odd half-angle iff $8\mid n$, and then $c_n=1$ so the factor $\tfrac12$ is unavoidable for this $Q$. If $8\nmid n$, the nearest grid point sits at angular distance $\Delta_n=\pi\rho(n)/(4n)$ from $\pi/2$, giving $c_n=\cos^2\Delta_n<1$ and hence $\delta_n<1/2$.
\end{rem}

\section{Phase-optimised cycle bases}

The eigenbasis of the $n$-cycle graph is not unique. For each $j \in \{1, \dots, \lfloor(n-1)/2\rfloor\}$, the corresponding eigenspace $E_j$ is two-dimensional, spanned by a sine and cosine vector. The canonical basis analysed in Section~\ref{sec:cycle} represents just one choice of basis within these spaces, a~choice that, for dimensions where $8 \mid n$, leads to an unavoidable coherence peak and a bound of $\delta_n=1/2$. This section demonstrates that by rotating the basis in each eigenspace via an optimally chosen phase angle $\phi$, we can construct a new orthogonal matrix $Q^{\rm (ph)}$. This `phase-optimised` basis is designed to minimise the maximum magnitude of its entries, effectively smoothing the coherence peaks and removing the ``$8 \mid n$'' artefact entirely. More specifically, for each $j\in\{1,\dots,\lfloor(n-1)/2\rfloor\}$ the cycle eigenspace is two-dimensional,
$E_j=\operatorname{span}\{u_j,v_j\}$ with
\(
u_j(k)=\sqrt{\tfrac2n}\cos(\tfrac{2\pi jk}{n}),
\ v_j(k)=\sqrt{\tfrac2n}\sin(\tfrac{2\pi jk}{n}) .
\)
Any orthonormal pair in $E_j$ is of the form
\[
w^{(\phi)}_j(k)=\sqrt{\tfrac{2}{n}}\sin\!\Big(\tfrac{2\pi jk}{n}+\phi\Big),\qquad
\tilde w^{(\phi)}_j(k)=\sqrt{\tfrac{2}{n}}\cos\!\Big(\tfrac{2\pi jk}{n}+\phi\Big),
\]
for some phase $\phi$. When $n$ is even and $j=n/2$, the eigenspace is one-dimensional; choosing the phase so that $|\sin(\phi)|=1/\sqrt2$ yields a unit vector with $|w_{n/2}(k)|\equiv 1/\sqrt n$, hence $n\,|q_{n/2}(k)q_{n/2}(l)|\equiv 1$.

We must control \emph{both} columns in each $E_j$ simultaneously. Write $n':=n/\gcd(j,n)$; then $\{\tfrac{2\pi jk}{n}\}$ is a translate of the grid with step $2\pi/n'$.

\begin{lem}[Two-phase covering radius on a grid]\label{lem:twophase}
Let $n':=n/\gcd(j,n)$ and set $S:=2\pi/n'$ and $L:=\pi/2$. All angles are considered reduced modulo $L$. Define
\[
\Delta^\star(n')\ :=\
\begin{cases}
\pi/n',     & n'\equiv 0\ (\mathrm{mod}\ 4),\\[1mm]
\pi/(2n'),  & n'\equiv 2\ (\mathrm{mod}\ 4),\\[1mm]
\pi/(4n'),  & n'\ \mathrm{odd}.
\end{cases}
\]
For $w^{(\phi)}_j(k)=\sqrt{\tfrac{2}{n}}\sin(\tfrac{2\pi jk}{n}+\phi)$ and
$\tilde w^{(\phi)}_j(k)=\sqrt{\tfrac{2}{n}}\cos(\tfrac{2\pi jk}{n}+\phi)$ one has
\[
\min_{\phi}\ \max\!\Big\{\,\|w^{(\phi)}_j\|_\infty,\ \|\tilde w^{(\phi)}_j\|_\infty\,\Big\}
\ =\ \sqrt{\tfrac2n}\ \cos\big(\Delta^\star(n')\big),
\]
and consequently
\[
\min_{\phi}\ \sup_{k,l}\ n\,\max\{|w^{(\phi)}_j(k)w^{(\phi)}_j(l)|,\ |\tilde w^{(\phi)}_j(k)\tilde w^{(\phi)}_j(l)|\}
\ =\ 2\cos^2\big(\Delta^\star(n')\big).
\]
\end{lem}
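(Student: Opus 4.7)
The plan is to reduce the simultaneous control of $\|w^{(\phi)}_j\|_\infty$ and $\|\tilde w^{(\phi)}_j\|_\infty$ to a single covering-radius problem on a shifted arithmetic grid in $\mathbb{R}/\pi\mathbb{Z}$, and then to carry out the arithmetic in three cases according to $n'\bmod 4$. Setting $\theta_k:=2\pi jk/n+\phi$ and using $\sin^2\theta=(1-\cos 2\theta)/2$, $\cos^2\theta=(1+\cos 2\theta)/2$, I first get
\[
\max\bigl\{\|w^{(\phi)}_j\|_\infty^2,\ \|\tilde w^{(\phi)}_j\|_\infty^2\bigr\}
\;=\; \tfrac{1}{n}\bigl(1+\max_k|\cos 2\theta_k|\bigr),
\]
so minimising over $\phi$ is equivalent to minimising $\max_k|\cos\psi_k|$ with $\psi_k:=4\pi jk/n+2\phi$; since $|\cos|$ has period $\pi$, only the image of $\{\psi_k\}$ in $\mathbb{R}/\pi\mathbb{Z}$ matters.

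The set $\{2\pi jk/n\bmod 2\pi : k=0,\dots,n-1\}$ coincides with $\{2\pi\ell/n'\bmod 2\pi : \ell=0,\dots,n'-1\}$ (each point with multiplicity $\gcd(j,n)$), so $\{\psi_k\bmod\pi\}$ is the shift by $2\phi$ of $G_{n'}:=\{4\pi\ell/n'\bmod\pi:\ell\}$. A short case analysis driven by $\gcd(4,n')\in\{1,2,4\}$ shows $G_{n'}$ is an equidistant grid of $\mathbb{R}/\pi\mathbb{Z}$ with step
\[
s=\begin{cases}\pi/n', & n'\ \mathrm{odd},\\ 2\pi/n', & n'\equiv 2\pmod{4},\\ 4\pi/n', & n'\equiv 0\pmod{4},\end{cases}
\]
and with $n'$, $n'/2$, or $n'/4$ distinct points respectively, filling the whole circle.

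For any such equidistant grid of step $s$, varying the shift $2\phi$ lets $\min_k\mathrm{dist}(\psi_k,\pi\mathbb{Z})$ range over $[0,s/2]$, with maximum $s/2$ attained when $2\phi$ lands midway between two consecutive grid points; since $|\cos|$ decreases with distance from $\pi\mathbb{Z}$, this gives $\min_\phi\max_k|\cos\psi_k|=\cos(s/2)$. Plugging back in and invoking $1+\cos(s/2)=2\cos^2(s/4)$ with $\Delta^\star(n'):=s/4$ (which exactly reproduces the three values in the statement) yields $\min_\phi\max\{\|w\|_\infty^2,\|\tilde w\|_\infty^2\}=(2/n)\cos^2(\Delta^\star(n'))$, hence the first displayed equality after taking square roots. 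The second (``consequently'') equality follows because $n|w(k)w(l)|\le n\|w\|_\infty^2$ with equality at $(k,l)=(k^\star,k^\star)$ where the $\sup$-norm is realised, and likewise for $\tilde w$. The one genuinely delicate step is confirming that $G_{n'}$ really is equidistant with the claimed step in each residue class: this is an elementary modular arithmetic verification (the multiplicities $1,2,4$ come straight from $\gcd(4,n')$), but it is the reason why the three-piece shape of $\Delta^\star$ arises and why the $8\mid n$ pathology of the canonical basis disappears here.
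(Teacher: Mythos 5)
Your proof is correct and reaches the stated conclusion via what is essentially the same covering-radius computation as the paper. The only substantive difference is the initial reduction: the paper uses the identity $\max\{|\sin x|,|\cos x|\}=\cos\bigl(\operatorname{dist}(x,\tfrac{\pi}{2}\mathbb{Z})\bigr)$ to work directly in $\mathbb{R}/\tfrac{\pi}{2}\mathbb{Z}$, whereas you double angles via $\max\{\sin^2\theta,\cos^2\theta\}=\tfrac12(1+|\cos 2\theta|)$ and work modulo $\pi$. These two pictures correspond under the bijection $x\mapsto 2x$ between the quotient circles, and both land on the same equidistant grid with step governed by $\gcd(4,n')$, so the covering-radius step and the identification $\Delta^\star(n')=s/4$ (equivalently the paper's $s_{\mathrm{eff}}/2$) are identical. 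Your route is marginally more elementary since it rests on the standard double-angle identity rather than a distance-to-lattice reformulation; otherwise the cyclic-group and covering-radius content agrees with the paper, and your handling of the second \emph{consequently} display (attaining the sup on the diagonal $k=l$) matches the paper's argument as well.
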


\begin{proof}
Let $A:=\{mL:\ m\in\mathbb Z\}$ with $L=\pi/2$. For any $x\in\mathbb R$,
\begin{equation}\label{eq:max-sin-cos}
\max\{|\sin x|,|\cos x|\}=\cos\big(\operatorname{dist}(x,A)\big).
\end{equation}
Indeed, reduce $x$ modulo $L$ to $y\in[0,L]$. Then
\[
|\cos x|=|\cos y|,\qquad |\sin x|=|\sin y|=|\cos(L-y)|.
\]
Hence $\max\{|\sin x|,|\cos x|\}=\max\{|\cos y|,|\cos(L-y)|\}=\cos\big(\min\{y,L-y\}\big)$.
However $\min\{y,L-y\}=\operatorname{dist}(x,A)$ by construction, giving \eqref{eq:max-sin-cos}.

\smallskip

Fix $j$ and write $n':=n/\gcd(j,n)$. The set of phases
\[
\Theta_j:=\Big\{\tfrac{2\pi jk}{n}\,:\ k=0,1,\dots,n-1\Big\}
\]
is a union of $\gcd(j,n)$ translates of the uniform grid $\{2\pi m/n':\ m=0,\dots,n'-1\}$. Since a~translation does not affect sup norms, it suffices to study the latter uniform grid. After adding a~free phase $\phi$ and reducing modulo $L$ we obtain
\[
X_\phi:=\Big\{\tfrac{2\pi m}{n'}+\phi\ \bmod L:\ m=0,\dots,n'-1\Big\}\ \subset \ \mathbb R/L\mathbb Z.
\]
This set is a coset (translate) of the cyclic subgroup $G:=\langle \overline{S}\rangle$ of the circle $\mathbb R/L\mathbb Z$, generated by the step $\overline{S}$ which is the class of $S:=2\pi/n'$ modulo $L$.

\smallskip

The order $r:=|G|$ is the least positive integer such that $r\overline{S}=\overline{0}$, i.e.
\[
rS\in L\mathbb Z \quad\Longleftrightarrow\quad \frac{2\pi r}{n'}=\frac{\pi}{2}\,q \ \text{ for some } q\in\mathbb Z
\quad\Longleftrightarrow\quad \frac{4r}{n'}=q\in\mathbb Z.
\]
Thus $r$ is the least positive integer with $n' \mid 4r$, which is
\[
r=\frac{n'}{\gcd(n',4)}.
\]
Consequently, the $r$ points of $G$ are equally spaced on the circle $\mathbb R/L\mathbb Z$ with spacing
\begin{equation}\label{eq:seff}
s_{\mathrm{eff}}=\frac{L}{r}=\frac{\pi}{2}\cdot\frac{\gcd(n',4)}{n'}
=
\begin{cases}
2\pi/n',   & n'\equiv 0\ (\mathrm{mod}\ 4),\\[1mm]
\pi/n',    & n'\equiv 2\ (\mathrm{mod}\ 4),\\[1mm]
\pi/(2n'), & n'\ \text{odd}.
\end{cases}
\end{equation}

\smallskip

For a fixed translate $X_\phi=\overline{\phi}+G$, the maximal distance from the origin $\overline{0}$ to $X_\phi$ is
\[
\rho(\phi):=\max_{x\in X_\phi}\operatorname{dist}(x,\overline{0}).
\]
Because $X_\phi$ is equally spaced with gap $s_{\mathrm{eff}}$, the circle $\mathbb R/L\mathbb Z$ is partitioned into $r$ closed arcs of length $s_{\mathrm{eff}}$ whose endpoints are the points of $X_\phi$. The point $\overline{0}$ lies in a unique such arc; its distance to the nearer endpoint is at most $s_{\mathrm{eff}}/2$, and equals $s_{\mathrm{eff}}/2$ precisely when $\overline{0}$ is at the midpoint of that arc. Therefore
\begin{equation}\label{eq:radius}
\min_{\phi}\ \rho(\phi)\ =\ \frac{s_{\mathrm{eff}}}{2}.
\end{equation}
Moreover, such a minimising shift $\phi$ exists: take any $x_0\in G$ and choose $\phi$ so that $\overline{\phi}+x_0=\overline{s_{\mathrm{eff}}/2}$.

\smallskip

For any $\phi$ we have, by \eqref{eq:max-sin-cos},
\[
\max\Big\{\|w^{(\phi)}_j\|_\infty,\ \|\tilde w^{(\phi)}_j\|_\infty\Big\}
=\sqrt{\tfrac{2}{n}}\ \max_{x\in X_\phi}\max\{|\sin x|,|\cos x|\}
=\sqrt{\tfrac{2}{n}}\ \max_{x\in X_\phi}\cos\big(\operatorname{dist}(x,\overline{0})\big).
\]
Since $\cos$ is decreasing on $[0,\pi/2]$, the inner maximum equals $\cos(d(\phi))$ where
\[
d(\phi):=\min_{x\in X_\phi}\operatorname{dist}(x,\overline{0}).
\]
Thus
\[
\min_{\phi}\ \max\Big\{\|w^{(\phi)}_j\|_\infty,\ \|\tilde w^{(\phi)}_j\|_\infty\Big\}
=\sqrt{\tfrac{2}{n}}\ \min_{\phi}\cos\big(d(\phi)\big)
=\sqrt{\tfrac{2}{n}}\ \cos\!\Big(\max_{\phi} d(\phi)\Big).
\]

Because $X_\phi$ is a translate of a cyclic subgroup $G$ of order $r=\frac{n'}{\gcd(n',4)}$ consisting of $r$ equally spaced points with gap $s_{\mathrm{eff}}=L/r$, the circle $\mathbb R/L\mathbb Z$ is partitioned into $r$ arcs of length $s_{\mathrm{eff}}$ with endpoints $X_\phi$. Placing $\overline{0}$ at the midpoint of one such arc yields 
\[
\max_\phi d(\phi)=\frac{s_{\mathrm{eff}}}{2}.
\]
Combining with \eqref{eq:seff} gives the stated $\sqrt{2/n}\,\cos(\Delta^\star(n'))$.

Finally, for either column in the pair, we obtain
\[
\min_{\phi}\ \sup_{k,l}\ n\,|q(k)q(l)|
\ \le\ \min_{\phi}\ n\,\|q\|_\infty^2
\ \le\ n\cdot \Big(\sqrt{\tfrac{2}{n}}\cos(\Delta^\star(n'))\Big)^2
=2\cos^2(\Delta^\star(n')).
\]
This upper bound is tight because, at the minimising shift, there are $k$ attaining the distance $\rho(\phi)=s_{\mathrm{eff}}/2$, hence $|q(k)|=\sqrt{2/n}\,\cos(\Delta^\star(n'))$ for one of the two vectors. Taking the maximum over the two vectors then attains $2\cos^2(\Delta^\star(n'))$ on some pair $(k,l)$, proving the second claim.
\end{proof}

Because $\Delta^\star(n')$ decreases with $n'$, the worst case is $\gcd(j,n)=1$, \emph{i.e.},\ $n'=n$.

\begin{thm}[Phase-optimised coherence]\label{thm:phase-correct}
There exists an orthogonal matrix $Q^{\rm (ph)}$ with first column $\1/\sqrt n$ such that
\[
M\big(Q^{\rm (ph)}\big)\ =\
\begin{cases}
2\cos^2\!\big(\dfrac{\pi}{n}\big),   & n\equiv 0\pmod{4},\\[2mm]
2\cos^2\!\big(\dfrac{\pi}{2n}\big),  & n\equiv 2\pmod{4},\\[2mm]
2\cos^2\!\big(\dfrac{\pi}{4n}\big),  & n\ \mathrm{odd},
\end{cases}
\]
and this value is optimal within the cycle family.
Consequently,
\[
\delta^{\mathrm{(ph)}}_n \;:=\;
1-\frac{1}{ M(Q^{\rm (ph)})}
\ =\
\begin{cases}
\displaystyle 1-\dfrac{1}{2\cos^2(\pi/n)},   & n\equiv 0\pmod{4},\\[2mm]
\displaystyle 1-\dfrac{1}{2\cos^2(\pi/2n)},  & n\equiv 2\pmod{4},\\[2mm]
\displaystyle 1-\dfrac{1}{2\cos^2(\pi/4n)},  & n\ \text{odd},
\end{cases}
\]
and $0\le \delta^{\mathrm{(ph)}}_n<\tfrac12$ for every $n\ge 3$. Moreover, $\delta^{\mathrm{(ph)}}_n=\delta_n$ unless $8\mid n$, and in all cases $\delta^{\mathrm{(ph)}}_n \nearrow \tfrac12$ as $n\to\infty$.
\end{thm}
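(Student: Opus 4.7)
The plan is to assemble $Q^{\rm (ph)}$ column by column using the optimal phase choices guaranteed by Lemma~\ref{lem:twophase}, compute its coherence, and then read off the remaining claims from the resulting closed form for $\delta^{\rm (ph)}_n$.

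First I would set the Perron column $q_0=\1/\sqrt n$ and, for every $j\in\{1,\dots,\lfloor(n-1)/2\rfloor\}$ with $2j\ne n$, place in $Q^{\rm (ph)}$ the pair $(w^{(\phi_j)}_j,\tilde w^{(\phi_j)}_j)$, where $\phi_j$ is a phase attaining the two-phase covering radius for the frequency-$j$ eigenspace in Lemma~\ref{lem:twophase}. When $n$ is even I append the one-dimensional eigenvector at $j=n/2$, which has constant modulus $1/\sqrt n$ and therefore contributes $n|q_{n/2}(k)q_{n/2}(l)|\equiv 1$ to the coherence. Orthogonality across distinct eigenspaces is automatic from the cycle-graph structure, while orthogonality within each $E_j$ is built into the $(\sin,\cos)$ pair.

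Next I would evaluate $M(Q^{\rm (ph)})=\max_{j\ge 1}\sup_{k,l} n|q_j(k)q_j(l)|$ by maximising over frequencies. Lemma~\ref{lem:twophase} gives that each two-dimensional pair at frequency $j$ contributes exactly $2\cos^2\!\big(\Delta^\star(n/\gcd(j,n))\big)$. The main subtlety is to show that, across all $j$, the minimum of $\Delta^\star(n/\gcd(j,n))$ is attained at some $j$ coprime to $n$, equivalently at $n':=n/\gcd(j,n)=n$. The cleanest route is the identity $\Delta^\star(n')=\pi/(4\,r(n'))$ with $r(n')=n'/\gcd(n',4)=\operatorname{lcm}(n',4)/4$; since $n'\mid n$ implies $\operatorname{lcm}(n',4)\mid\operatorname{lcm}(n,4)$, one obtains $r(n')\le r(n)$ and hence $\Delta^\star(n')\ge\Delta^\star(n)$. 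The one-dimensional contribution $1$ is dominated by $2\cos^2(\Delta^\star(n))$ because $\Delta^\star(n)\le\pi/4$ for every $n\ge 3$. Substituting the three residue cases of $n\bmod 4$ into the definition of $\Delta^\star$ then produces the tabulated value of $M(Q^{\rm (ph)})$; optimality within the cycle family is inherited from Lemma~\ref{lem:twophase}, whose right-hand side is itself the minimum over all admissible phases.

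Finally, the remaining assertions follow by inspection of the closed form $\delta^{\rm (ph)}_n=1-1/(2\cos^2\Delta^\star(n))$: nonnegativity amounts to $\Delta^\star(n)\le\pi/4$ and strictness $\delta^{\rm (ph)}_n<1/2$ to $\Delta^\star(n)>0$, both of which hold for $n\ge 3$. The comparison $\delta^{\rm (ph)}_n=\delta_n$ unless $8\mid n$ is a residue-class tabulation against $\Delta_n=\pi\rho(n)/(4n)$ from Lemma~\ref{lem:cn}: odd $n$ yields $\pi/(4n)$ on both sides, $n\equiv 2,6\pmod 8$ yields $\pi/(2n)$, and $n\equiv 4\pmod 8$ yields $\pi/n$, while only for $n\equiv 0\pmod 8$ does $\Delta_n$ collapse to $0$ whereas $\Delta^\star(n)=\pi/n$ remains positive, yielding the strict improvement $\delta^{\rm (ph)}_n<1/2=\delta_n$. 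Convergence $\delta^{\rm (ph)}_n\to 1/2$ follows since $\Delta^\star(n)\to 0$. The main obstacle in this plan is precisely the lcm reduction identifying $n'=n$ as the worst frequency; everything else is a residue-class bookkeeping matched to Lemma~\ref{lem:cn}.
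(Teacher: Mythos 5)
Your proposal is correct and tracks the paper's proof closely: both construct $Q^{\rm (ph)}$ by choosing the optimal phase $\phi_j$ in each two-dimensional eigenspace $E_j$ as in Lemma~\ref{lem:twophase}, append the constant-modulus column at $j=n/2$ when $n$ is even, bound the coherence by maximising over frequencies, and inherit optimality within the cycle family from the per-eigenspace optimality of the two-phase covering radius. The residue-class bookkeeping identifying $\delta^{\rm (ph)}_n$ with $\delta_n$ away from $8\mid n$, the bounds $0\le\delta^{\rm (ph)}_n<\tfrac12$, and the convergence to $\tfrac12$ all match the paper.

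Where you genuinely improve on the paper is the step showing that the maximum over $j$ of $2\cos^2\!\bigl(\Delta^\star(n/\gcd(j,n))\bigr)$ is attained at $n'=n$. The paper argues only that $n'\mapsto\cos(\Delta^\star(n'))$ is increasing in $n'$ \emph{within each congruence class mod $4$}, which does not by itself let one compare divisors of $n$ lying in different classes. Your identity $\Delta^\star(n')=\pi/\bigl(4\,r(n')\bigr)$ with $r(n')=n'/\gcd(n',4)=\operatorname{lcm}(n',4)/4$, combined with $n'\mid n\Rightarrow\operatorname{lcm}(n',4)\mid\operatorname{lcm}(n,4)\Rightarrow r(n')\le r(n)$, gives the needed monotonicity uniformly across all divisors and closes that gap cleanly; it also explains why the one-dimensional column contributes at most $1\le 2\cos^2\Delta^\star(n)$ since $\Delta^\star(n)\le\pi/4$ for $n\ge 3$. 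This is a tidier route than the paper's and is the one point worth retaining verbatim.
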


\begin{proof}
To show the upper bound, for each $j\in\{1,\dots,\lfloor (n-1)/2\rfloor\}$ let us choose a phase $\phi_j$ that attains the minimum in Lemma~\ref{lem:twophase} for $n'=n/\gcd(j,n)$, and replace the canonical pair $\{u_j,v_j\}$ by $\{w_j^{(\phi_j)},\tilde w_j^{(\phi_j)}\}$. This rotation within $E_j$ preserves orthonormality and keeps the Perron column $w_0=\1/\sqrt n$ unchanged; doing this independently for each $j$ yields an orthogonal matrix $Q^{\rm (ph)}$.

By Lemma~\ref{lem:twophase}, for each such $j$,
\[
\max\big\{\|w_j^{(\phi_j)}\|_\infty,\ \|\tilde w_j^{(\phi_j)}\|_\infty\big\}
=\sqrt{\tfrac{2}{n}}\;\cos\!\big(\Delta^\star(n')\big),
\]
hence for either column in this pair we have
\(
n\,\|q\|_\infty^2\le 2\cos^2\!\big(\Delta^\star(n')\big).
\)
If $n$ is even and $j=n/2$, the one-dimensional eigenspace yields a column with constant modulus $1/\sqrt n$, so $n\,\|q\|_\infty^2=1$.

Therefore
\begin{equation}\label{eq:M-upper}
M\big(Q^{\rm (ph)}\big)\ \le\ \max\Big\{\,\mathbf{1}_{\{2\mid n\}},\ \max_{1\le j\le \lfloor (n-1)/2\rfloor}\ 2\cos^2\!\big(\Delta^\star(n/\gcd(j,n))\big)\Big\}.
\end{equation}
Since $n'\mapsto \cos(\Delta^\star(n'))$ is strictly increasing in $n'$ in each congruence class of $n'\bmod 4$, the inner maximum is achieved when $n'$ is as large as possible, namely $n'=\max\{\,n/\gcd(j,n)\,\}=n$ (take any $j$ coprime to $n$). Using the explicit values for $\Delta^\star(n)$ from Lemma~\ref{lem:twophase} gives
\[
\max_{j}2\cos^2\!\big(\Delta^\star(n/\gcd(j,n))\big)
=
\begin{cases}
2\cos^2(\pi/n), & n\equiv 0\pmod 4,\\[1mm]
2\cos^2(\pi/2n), & n\equiv 2\pmod 4,\\[1mm]
2\cos^2(\pi/4n), & n\ \text{odd}.
\end{cases}
\]
For $n\equiv 0\pmod 4$ we have $2\cos^2(\pi/n)\ge 1$ with equality only at $n=4$; for $n\equiv 2\pmod 4$ clearly $2\cos^2(\pi/2n)>1$; for odd $n$ the $j=n/2$ column is absent. Thus the outer maximum in \eqref{eq:M-upper} equals the displayed piecewise quantity, proving
\[
M\big(Q^{\rm (ph)}\big)\ \le\
\begin{cases}
2\cos^2(\pi/n), & n\equiv 0\pmod 4,\\
2\cos^2(\pi/2n), & n\equiv 2\pmod 4,\\
2\cos^2(\pi/4n), & n\ \text{odd}.
\end{cases}
\]

In order to demonstrate optimality (the lower bound), let us consider any orthogonal matrix $Q$ obtainable from the cycle basis by independent rotations within each $E_j$ (this is what we mean by “within the cycle family”). Fix $j$ and write $n'=n/\gcd(j,n)$. By Lemma~\ref{lem:twophase}, \emph{every} orthonormal pair drawn from $E_j$ satisfies
\[
\max\{\,n\|q\|_\infty^2 : q\in E_j,\ \|q\|_2=1\,\}\ \ge\ 2\cos^2\!\big(\Delta^\star(n')\big).
\]
Indeed, fix $j$ and recall that $E_j$ is the two-dimensional eigenspace
spanned by $\{u_j,v_j\}$. By Lemma~\ref{lem:twophase}, for \emph{any}
orthonormal pair $(u,v)$ spanning $E_j$ one has
\[
\max\{\|u\|_\infty,\ \|v\|_\infty\}\ \ge\ 
\sqrt{\tfrac{2}{n}}\ \cos\!\bigl(\Delta^\star(n')\bigr),
\qquad n':=\tfrac{n}{\gcd(j,n)}.
\]
Squaring and multiplying by $n$ yields
\[
\max\{\,n\|u\|_\infty^2,\ n\|v\|_\infty^2\,\}\ \ge\
2\cos^2\!\bigl(\Delta^\star(n')\bigr).
\]
Since $u$ and $v$ are unit vectors in $E_j$, this shows the desired inequality. 
In other words, whichever orthonormal basis of $E_j$ we pick, at least
one vector in it has sup-norm squared of order $2\cos^2(\Delta^\star(n'))/n$.
Thus the displayed inequality holds for \emph{every} orthonormal pair
drawn from $E_j$. (In the special case $j=n/2$ when $n$ is even, $E_{n/2}$
is one-dimensional, and one can choose $q$ with $|q(k)|\equiv 1/\sqrt{n}$,
so $n\|q\|_\infty^2=1$; this agrees with
$2\cos^2(\Delta^\star(2))=1$.)

Hence
\[
M(Q)\ \ge\ \max\Big\{\,\mathbf{1}_{\{2\mid n\}},\ \max_{j}2\cos^2\!\big(\Delta^\star(n/\gcd(j,n))\big)\Big\}
\]
\[
=\begin{cases}
2\cos^2(\pi/n), & n\equiv 0\pmod 4,\\
2\cos^2(\pi/2n), & n\equiv 2\pmod 4,\\
2\cos^2(\pi/4n), & n\ \text{odd},
\end{cases}
\]
where the equality follows by the same monotonicity and the presence (when $2\mid n$) of the $j=n/2$ column with value $1\le$ the right-hand side. Combining the upper and lower bounds yields the asserted equality for $M(Q^{\rm (ph)})$ and proves optimality within the cycle family.

Finally, Lemma~\ref{lem:coh} with $M(Q^{\rm (ph)})$ as above gives the sufficient condition
\[
1+\sum_{j=2}^n\lambda_j>1-\tfrac{1}{M(Q^{\rm (ph)})},
\]
which is precisely the stated $\delta^{\rm (ph)}_n$.
\end{proof}

\begin{cor}[Uniform phase bound]\label{cor:uniform}
As $\delta_\ast:=\sup_{n\ge3}\delta^{\mathrm{(ph)}}_n=\tfrac12$, for every $n\ge3$ and every Sule\u{\i}manova list $(1,\lambda_2,\ldots,\lambda_n)$ with $\lambda_j\le0$,
\[
1+\sum_{j=2}^{n}\lambda_j \;>\; \delta_\ast\ (= \tfrac12)
\quad\Longrightarrow\quad
Q^{(\mathrm{ph})}\,\Lambda\,\big(Q^{(\mathrm{ph})}\big)^{\!\top}\ \text{is symmetric and doubly stochastic}.
\]
\end{cor}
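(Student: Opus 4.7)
The plan is to derive the corollary as a direct consequence of Theorem~\ref{thm:phase-correct}, using nothing more than the asymptotic behaviour of $\delta^{\mathrm{(ph)}}_n$ as $n\to\infty$. Concretely, the statement has two parts: first, the evaluation $\sup_{n\ge3}\delta^{\mathrm{(ph)}}_n=\tfrac12$; second, the implication that a single uniform threshold $\tfrac12$ suffices to realise every Sule\u{\i}manova list in every dimension via the phase-optimised basis. Both parts will follow quickly once the monotonicity of the three piecewise branches is sorted out.

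For the supremum, I would separately examine the three branches in Theorem~\ref{thm:phase-correct}. In each case the relevant quantity is $1-\tfrac{1}{2\cos^2(\pi/(cn))}$ for $c\in\{1,2,4\}$. Since $\pi/(cn)\to 0$, we have $\cos^2(\pi/(cn))\to 1$, hence $\delta^{\mathrm{(ph)}}_n\to\tfrac12$ along each residue class; combining the three residue classes gives $\limsup_{n\to\infty}\delta^{\mathrm{(ph)}}_n=\tfrac12$. The strict upper bound $\delta^{\mathrm{(ph)}}_n<\tfrac12$ for every $n\ge 3$, already recorded in Theorem~\ref{thm:phase-correct} (because $\cos(\pi/(cn))<1$ for any $n\ge 2$), shows the supremum is not attained. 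Therefore $\sup_{n\ge3}\delta^{\mathrm{(ph)}}_n=\tfrac12=:\delta_\ast$.

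For the implication, fix any dimension $n\ge 3$ and any Sule\u{\i}manova list $(1,\lambda_2,\dots,\lambda_n)$ with $\lambda_j\le 0$ satisfying $1+\sum_{j=2}^n\lambda_j>\tfrac12$. By the supremum identity just proved, $\delta^{\mathrm{(ph)}}_n\le\delta_\ast=\tfrac12$, so
\[
1+\sum_{j=2}^n\lambda_j\ >\ \tfrac12\ \ge\ \delta^{\mathrm{(ph)}}_n .
\]
Theorem~\ref{thm:phase-correct} then produces an orthogonal $Q^{(\mathrm{ph})}$ with first column $\mathbf 1/\sqrt n$ such that $Q^{(\mathrm{ph})}\Lambda(Q^{(\mathrm{ph})})^\top$ is symmetric and doubly stochastic, giving the stated conclusion.

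There is really no hard step: the only minor technicality is the case-split on $n\bmod 4$ to confirm that the supremum is the same in all three branches, but each reduces to the elementary limit $\cos(\pi/(cn))\to 1$. Thus the corollary is a genuinely soft consequence — its value lies in packaging Theorem~\ref{thm:phase-correct} as a single basis-independent, dimension-independent statement matching the $\tfrac12$ threshold of \cite{GK20}, while noting that the phase-optimised construction strictly improves on that threshold at every finite $n$.
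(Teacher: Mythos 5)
Your proof is correct and is exactly the argument the paper leaves implicit: the corollary is an immediate consequence of Theorem~\ref{thm:phase-correct}, since $\delta^{\mathrm{(ph)}}_n<\tfrac12$ for every $n\ge3$ while $\delta^{\mathrm{(ph)}}_n\to\tfrac12$ along each congruence class (so the supremum is $\tfrac12$ and is not attained), and then $1+\sum\lambda_j>\tfrac12\ge\delta^{\mathrm{(ph)}}_n$ triggers Theorem~\ref{thm:phase-correct} in every dimension. No gap; this matches the paper's intent.
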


\section{A coherence criterion}

While the phase-optimised basis successfully removed the `$8 \mid n$' artefact, its coherence still approaches the maximum value of 2 as $n \to \infty$, preventing a uniform improvement on the $\delta < 1/2$ bound. To achieve a stronger result, we must look beyond the eigenbasis of the cycle graph. This section develops a general tool to guide this search. We first isolate the key property of the basis that governs the SDIEP bound, what we call its \emph{coherence}, a measure of the peak magnitude of its entries, and prove that a lower coherence directly implies a better bound $\delta$. Armed with this criterion, we then exhibit two families of `flat' bases, constructed from Hadamard matrices, which possess the ideal coherence of $M(Q)=1$. For dimensions where such matrices do exist, this allows us to prove the realisability of \emph{all} Sule\u{\i}manova spectra with a strictly positive trace sum.

\begin{lem}[Generic coherence criterion]\label{lem:coh}
Let $Q$ be any (real) orthogonal matrix with first column $q_0=\1/\sqrt n$. Define
\[
M(Q):=\sup_{1\le j\le n-1}\ \sup_{0\le k,l\le n-1}\ n\,|q_j(k)q_j(l)|
\ =\ n\max_{1\le j\le n-1}\|q_j\|_\infty^2.
\]
If $1+\sum_{j=2}^n\lambda_j>1-\tfrac1{M(Q)}$ and $\lambda_j\le0$, then $P(\Lam)=Q\Lam Q^\top$ is symmetric and doubly stochastic.
\end{lem}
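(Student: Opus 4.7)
The plan is to mirror the three-part proof of Theorem~\ref{thm:cycle} at the level of an arbitrary orthogonal matrix whose first column is the Perron vector. Two of those parts transfer verbatim and require no structural input on $Q$: the identity $P(\Lambda)^\top = Q\Lambda^\top Q^\top = P(\Lambda)$ uses only that $\Lambda$ is diagonal, while the row- and column-sum computations $P(\Lambda)\1 = \1$ and $\1^\top P(\Lambda) = \1^\top$ depend only on the two equalities $q_0 = \1/\sqrt n$ (equivalent to $Q^\top \1 = \sqrt n\, e_0$) and $\Lambda e_0 = e_0$. I would simply recycle the block of algebra at the end of the proof of Theorem~\ref{thm:cycle}, which nowhere used any feature peculiar to the cycle basis beyond those two properties.

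The substantive step is entrywise non-negativity, which I would derive as a one-line abstraction of Lemma~\ref{lem:entrybound}. Expanding
\[
p_{kl}\ =\ \sum_{j=0}^{n-1}\lambda_j\,q_j(k)\,q_j(l)\ =\ \frac{1}{n}\ +\ \sum_{j=1}^{n-1}\lambda_j\,q_j(k)\,q_j(l)
\]
isolates the $j=0$ contribution $q_0(k)q_0(l) = 1/n$. The definition of $M(Q)$ gives $|q_j(k)q_j(l)| \le \|q_j\|_\infty^2 \le M(Q)/n$ for every $j \ge 1$ and all $k,l$; combining the one-sided upper bound $q_j(k)q_j(l) \le M(Q)/n$ with $\lambda_j \le 0$ reverses the inequality, yielding
\[
\lambda_j\,q_j(k)\,q_j(l)\ \ge\ \frac{M(Q)}{n}\,\lambda_j .
\]
Summing over $j \ge 1$ and substituting produces
\[
p_{kl}\ \ge\ \frac{1}{n}\Bigl(1 + M(Q)\sum_{j=1}^{n-1}\lambda_j\Bigr),
\]
which is non-negative precisely when $1 + \sum_{j\ge1}\lambda_j \ge 1 - 1/M(Q)$. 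The strict hypothesis of the lemma (written in the paper's original indexing as $1+\sum_{j=2}^n\lambda_j > 1 - 1/M(Q)$) therefore yields $p_{kl} > 0$ for every $(k,l)$, and in particular entrywise non-negativity.

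There is no real obstacle here: Lemma~\ref{lem:coh} is a clean basis-independent abstraction of the sign-reversal step already performed in Lemma~\ref{lem:entrybound}, with the universal quantity $M(Q)/n$ taking over the role of the cycle-specific constant $c_n = \cos^2\Delta_n$. The only point worth flagging explicitly is that the non-positivity of $\lambda_j$ requires the \emph{signed} bound $q_j(k)q_j(l) \le M(Q)/n$ rather than the absolute-value bound; this is of course immediate from $|q_j(k)q_j(l)| \le M(Q)/n$, but should be noted so the reader sees why the argument is insensitive to the signs of the individual products $q_j(k)q_j(l)$.
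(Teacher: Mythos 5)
Your proposal is correct and follows essentially the same route as the paper: isolate the $j=0$ Perron contribution $1/n$, apply the signed bound $n\,q_j(k)q_j(l)\le M(Q)$ together with $\lambda_j\le0$ to reverse the inequality, and recycle the orthogonality and Perron-column arguments for symmetry and row/column sums. The paper's version is merely terser, invoking ``as in Lemma~\ref{lem:entrybound}'' where you spell out the recycled algebra explicitly.
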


\begin{proof}
As in Lemma~\ref{lem:entrybound}, $P(\Lam)\1=\1$ and
\[
p_{kl}=\frac1n\Bigl(1+\sum_{j=1}^{n-1}\lambda_j\,n\,q_j(k)q_j(l)\Bigr)
\ \ge\ \frac1n\Bigl(1+M(Q)\sum_{j=1}^{n-1}\lambda_j\Bigr)
\]
since $\lambda_j\le0$ and $n\,q_j(k)q_j(l)\le M(Q)$.
\end{proof}

The cycle basis is the standard Fourier basis for the cyclic group $\mathbb{Z}_n$. To achieve the minimal coherence of $M=1$, we turn from the cycle graph to a different algebraic structure: the \emph{abelian 2-group}. An abelian 2-group is a group that is commutative and in which every element has an order that is a power of two; for a finite group, this is equivalent to its size being a power of two. The canonical example for any dimension $n=2^d$ is the elementary abelian 2-group $G = (\mathbb{Z}/2\mathbb{Z})^d$, whose elements can be viewed as binary vectors $\mathbf{x}=(x_1, \dots, x_d)$ of length $d$, and whose operation is vector addition modulo $2$ (bitwise XOR).

The natural analogue of the Fourier basis on this group is the basis of its \emph{characters}. A character is a homomorphism $\chi\colon G \to \{\pm 1\}$. For each indexing vector $\mathbf{a} \in G$, there is a corresponding character $\chi_{\mathbf{a}}: G \to \{\pm 1\}$ defined by the dot product:
\[
\chi_{\mathbf{a}}(\mathbf{x}) = (-1)^{\mathbf{a}\cdot\mathbf{x}} = (-1)^{\sum_{i=1}^d a_i x_i}.
\]
The \emph{Sylvester} (or \emph{Walsh--Hadamard}) matrix $H_n$ is constructed by indexing its rows and columns by the elements of $G$ (typically in lexicographical order) and setting the entry at row $\mathbf{a}$ and column $\mathbf{x}$ to be $\chi_{\mathbf{a}}(\mathbf{x})$.
\

\begin{rem}[Hadamard matrix orders]
It is a classical open problem, known as the \emph{Hadamard conjecture}, that Hadamard matrices exist for every order $n$ equal to $1$, $2$, or a multiple of $4$ \cite{HedayatWallis}. Therefore, a Walsh--Hadamard matrix of order $3$ does not exist.
\end{rem}

A simple recursive construction, known as the Sylvester construction, generates these matrices for all orders $n=2^d$:
\[
H_1 = \begin{pmatrix} 1 \end{pmatrix}, \quad H_2 = \begin{pmatrix} 1 & 1 \\ 1 & -1 \end{pmatrix}, \quad H_{2n} = \begin{pmatrix} H_n & H_n \\ H_n & -H_n \end{pmatrix}.
\]
For example, the matrix for $n=4=2^2$ is constructed as follows:
\[
H_4 = \begin{pmatrix} H_2 & H_2 \\ H_2 & -H_2 \end{pmatrix} = 
\left(\begin{array}{cc|cc}
1 & 1 & 1 & 1 \\
1 & -1 & 1 & -1 \\\hline
1 & 1 & -1 & -1 \\
1 & -1 & -1 & 1
\end{array}\right).
\]
By construction, the entries of $H_n$ are all $\pm 1$ and its columns are orthogonal. When normalised to $Q = H_n/\sqrt{n}$, we obtain an orthogonal matrix whose entries are all $\pm 1/\sqrt{n}$. This ``flat'' structure gives a coherence of $M(Q)=1$, providing a powerful tool for the SDIEP as formalised in the following theorem.

\begin{thm}[bases stemming from abelian $2$-groups]\label{thm:hypercube}
If $n=2^d$, let $H_n$ be the Sylvester (Walsh--Hadamard) matrix of order $n$. After sign/permutation normalisation, $Q=H_n/\sqrt n$ is orthogonal with first column $\1/\sqrt n$ and entries $\pm 1/\sqrt n$, hence $M(Q)=1$. Therefore, for \emph{every} Sule\u{\i}manova spectrum with $1+\sum_{j=2}^n\lambda_j>0$, the matrix $Q\Lam Q^\top$ is symmetric and doubly stochastic.
\end{thm}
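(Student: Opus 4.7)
The plan is to reduce the theorem to a direct application of Lemma~\ref{lem:coh}, since every required hypothesis is essentially built into the Walsh--Hadamard construction. The main verification is that $Q=H_n/\sqrt n$ is a legitimate input to that lemma, i.e.\ real, orthogonal, with first column $\1/\sqrt n$, and then a one-line computation of $M(Q)$.

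First I would record the standard facts about $H_n$ under the Sylvester recursion: by induction on $d$, $H_n$ has $\pm 1$ entries and $H_n H_n^\top = n I_n$, so $Q = H_n/\sqrt n$ is orthogonal. The character description shows that the row/column indexed by the identity element of $G=(\mathbb Z/2\mathbb Z)^d$ is the all-ones vector (because $\chi_{\mathbf 0}\equiv 1$, and, dually, $\sum_{\mathbf a}\chi_{\mathbf a}(\mathbf 0)$ places $1$ in every character slot at $\mathbf x=\mathbf 0$); after the stated sign/permutation normalisation we may place this all-ones vector as the first column, giving $q_0=\1/\sqrt n$ as required.

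Next I would observe the flatness: every entry of $Q$ has modulus $1/\sqrt n$, so for any $j\ge 1$ and any $k,l$,
\[
n\,|q_j(k)q_j(l)| \;=\; n\cdot \tfrac{1}{\sqrt n}\cdot \tfrac{1}{\sqrt n} \;=\; 1,
\]
hence $M(Q)=1$. Plugging into Lemma~\ref{lem:coh}, the sufficient condition becomes
\[
1+\sum_{j=2}^n \lambda_j \;>\; 1-\tfrac{1}{M(Q)} \;=\; 0,
\]
which is exactly the Sule\u{\i}manova-with-positive-trace hypothesis, and the lemma then delivers symmetry and double stochasticity of $P(\Lam)=Q\Lam Q^\top$.

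The only step requiring any care is the normalisation claim; nothing here is really an obstacle, but one should be explicit that relabelling rows and columns of $H_n$ (a permutation that preserves orthogonality and the $\pm 1/\sqrt n$ entry structure) together with negating columns as needed brings the all-ones eigenvector into the first column without altering $M(Q)$. Everything else is a one-line computation, so the proof is essentially a corollary of Lemma~\ref{lem:coh} specialised to flat orthogonal bases.
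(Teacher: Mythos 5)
Your proposal is correct and follows essentially the same route as the paper: observe that the normalised Walsh--Hadamard matrix has all entries of modulus $1/\sqrt n$ with first column $\1/\sqrt n$, deduce $M(Q)=1$, and invoke Lemma~\ref{lem:coh}. The only cosmetic difference is that you verify the all-ones column via the trivial character $\chi_{\mathbf 0}$ (and $\chi_{\mathbf a}(\mathbf 0)=1$), whereas the paper simply invokes the sign/permutation normalisation in the more general Hadamard setting of Theorem~\ref{thm:hadamard}; both reach the same conclusion by the same key lemma.
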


\begin{thm}[Hadamard orders via Paley-type constructions]\label{thm:hadamard}
Whenever a (real) Hadamard matrix $H$ of order $n$ exists with first column $\1$, taking $Q=H/\sqrt n$ yields $M(Q)=1$. Hence every Sule\u{\i}manova spectrum with $1+\sum_{j=2}^n\lambda_j>0$ is realised by a symmetric doubly stochastic matrix via $Q\Lam Q^\top$.
\end{thm}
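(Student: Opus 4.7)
The plan is to recognise that this theorem is essentially a direct corollary of Lemma~\ref{lem:coh}, with the whole content resting on two facts: a (real) Hadamard matrix satisfies $HH^\top=nI_n$, and every entry of $H$ lies in $\{\pm1\}$. There is no genuine obstacle; the task is to verify the hypotheses of Lemma~\ref{lem:coh} for $Q=H/\sqrt n$ and then read off the sufficient condition with the constant $M(Q)=1$.

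First I would verify orthogonality and the Perron column. From $HH^\top=nI_n$ we obtain $QQ^\top=I_n$, so $Q$ is orthogonal. The hypothesis that the first column of $H$ equals $\1$ directly yields $q_0=\1/\sqrt n$, as required by the Schur template. (I would remark in passing that this normalisation can always be enforced without loss of generality by flipping the signs of those rows of $H$ whose first entry is $-1$, an operation that preserves the Hadamard property; this is the standard ``normalised'' form.)

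Next I would compute the coherence. Because every entry of $H$ is $\pm1$, every entry of $Q$ has modulus $1/\sqrt n$, so $\|q_j\|_\infty^2=1/n$ for each $j$. Therefore
\[
M(Q)\ =\ n\,\max_{1\le j\le n-1}\|q_j\|_\infty^2\ =\ n\cdot \tfrac{1}{n}\ =\ 1,
\]
which is the minimum possible value of $M$ for any orthogonal matrix with $q_0=\1/\sqrt n$. Lemma~\ref{lem:coh} with $M(Q)=1$ then gives the sufficient condition
\[
1+\sum_{j=2}^{n}\lambda_j\ >\ 1-\frac{1}{M(Q)}\ =\ 0,
\]
and the conclusion that $Q\Lam Q^\top$ is symmetric and doubly stochastic.

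Finally I would comment on the scope. The hypothesis of the theorem is non-vacuous in a wide range of dimensions: besides the Sylvester orders $n=2^d$ already treated in Theorem~\ref{thm:hypercube}, Paley's quadratic-residue construction furnishes Hadamard matrices of order $p+1$ for every prime $p\equiv 3\pmod 4$, and of order $2(p+1)$ for every prime $p\equiv1\pmod 4$; together with Williamson-type and Kronecker-product constructions, this yields Hadamard matrices in almost every order $n\equiv 0\pmod 4$ within the range for which the Hadamard conjecture has been verified. In all these orders, the theorem reduces the SDIEP for Sule\u{\i}manova spectra to the single strict inequality $1+\sum_{j\ge2}\lambda_j>0$, which is the sharpest sum-based sufficient condition one can hope for.
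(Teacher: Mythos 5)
Your proof is correct and follows essentially the same route as the paper: verify that $Q=H/\sqrt n$ is orthogonal with Perron first column, observe that every entry has modulus $1/\sqrt n$ so $M(Q)=1$, and invoke Lemma~\ref{lem:coh} to conclude. Your side remark that the normalisation is achieved by flipping \emph{row} signs (to force the first column to $\1$) is in fact the correct operation, whereas the paper's combined proof slips and says ``columns''; this is a minor correction and does not change the argument.
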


\begin{proof}[Proof of Theorems \ref{thm:hypercube} and \ref{thm:hadamard} (combined)]
Let $H$ be a real Hadamard matrix of order $n$. By multiplying columns by $\pm1$ and permuting them (operations preserving the Hadamard property), normalise so that the first column is $\1$. Set $Q:=H/\sqrt n$. Then $Q$ is real orthogonal, its first column is $q_0=\1/\sqrt n$, and all entries satisfy $q_j(k)\in\{\pm 1/\sqrt n\}$.

Hence $M(Q)=\sup_{j\ge1,k,l}n|q_j(k)q_j(l)|=1$. By Lemma~\ref{lem:coh}, whenever $1+\sum_{j=2}^n\lambda_j>1-\tfrac1{M(Q)}=0$ and $\lambda_j\le 0$, the matrix $P(\Lambda)=Q\Lambda Q^\top$ is entrywise nonnegative. Symmetry is immediate, and
\[
P(\Lambda)\,\1=Q\Lambda Q^\top\1=Q\Lambda(\sqrt n\,e_0)=\sqrt n\,Qe_0=\1
\]
so $P(\Lambda)$ is doubly stochastic. The Walsh--Hadamard case corresponds to $n=2^d$; Paley-type constructions and other Hadamard orders give further instances.
\end{proof}

\begin{rem}
(i) The result requires the \emph{strict} condition $1+\sum\lambda_j>0$. Our lower bound on the matrix entries is $p_{kl}\ge \tfrac{1}{n}(1+\sum\lambda_j)$, which guarantees positivity under the strict inequality. The boundary case $1+\sum\lambda_j=0$ is not guaranteed to be realisable and is known to fail for certain spectra (e.g., $(1,0,\dots,0,-1)$ in odd dimensions).

(ii) Our use of Hadamard bases provides a constructive proof for \emph{all} (not necessarily sorted) Sule\u{\i}manova lists in these dimensions, provided the trace sum is positive. This complements results such as that of Johnson and Paparella \cite{JP16}, which establish existence for \emph{normalised} (sorted) Sule\u{\i}manova lists.
\end{rem}

\section{Further remarks and extensions}

\subsection*{Asymptotics (large \texorpdfstring{$n$}{n})}
For small angles, $\cos^2\theta=1-\theta^2+O(\theta^4)$, hence
\[
1-\frac{1}{2\cos^2\theta}\ =\ \frac12\ -\ \frac{\theta^2}{2}\ +\ O(\theta^4).
\]
For the canonical cycle basis, $\theta=\Delta_n=\frac{\pi}{4n}\rho(n)$ (Lemma~\ref{lem:cn}), so
\[
\delta_n\ =\
\begin{cases}
\displaystyle \frac12, & n\equiv 0\pmod{8},\\[1mm]
\displaystyle \frac12 - \frac{\pi^2}{32n^2}\rho(n)^2\ +\ O(n^{-4}), & n\not\equiv 0\pmod{8}.
\end{cases}
\]
Explicitly: $\delta_n=\frac12-\frac{\pi^2}{2n^2}+O(n^{-4})$ for $n\equiv 4\pmod{8}$; $\delta_n=\frac12-\frac{\pi^2}{8n^2}+O(n^{-4})$ for $n\equiv 2,6\pmod{8}$; and $\delta_n=\frac12-\frac{\pi^2}{32n^2}+O(n^{-4})$ for odd $n$. For the phase-optimised basis (Theorem~\ref{thm:phase-correct}),
\[
\delta_n^{\rm (ph)}=
\begin{cases}
\displaystyle \frac12-\frac{\pi^2}{2n^2}+O(n^{-4}),   & n\equiv 0\pmod{4},\\[2mm]
\displaystyle \frac12-\frac{\pi^2}{8n^2}+O(n^{-4}),  & n\equiv 2\pmod{4},\\[2mm]
\displaystyle \frac12-\frac{\pi^2}{32n^2}+O(n^{-4}), & n\ \text{odd}.
\end{cases}
\]

\subsection*{Tensor products and direct products of graphs}
If $Q^{(1)}$ and $Q^{(2)}$ are orthogonal with first columns $\1/\sqrt{n_1}$ and $\1/\sqrt{n_2}$, then $Q:=Q^{(1)}\otimes Q^{(2)}$ is orthogonal with first column $\1/\sqrt{n_1n_2}$ and
\[
M(Q)\ \le\ M\!\big(Q^{(1)}\big)\,M\!\big(Q^{(2)}\big).
\]
Thus Lemma~\ref{lem:coh} gives the sufficient sum-only condition
\[
1+\sum_{j=2}^{n_1n_2}\lambda_j\ >\ 1-\frac{1}{M(Q^{(1)})M(Q^{(2)})}
\]
for entrywise nonnegativity via $Q\Lam Q^\top$. In particular, flat ($M=1$) factors propagate.

\section{Limitations and open problems}

For the canonical cycle basis, $M(Q)=2$ exactly when $8\mid n$ (Lemma~\ref{lem:cn}), so no sum-only improvement below $\tfrac12$ is possible there. Even when $8\nmid n$, $M(Q)=2\cos^2\Delta_n=2\bigl(1-O(n^{-2})\bigr)$, so the gain is $O(n^{-2})$ (Theorem~\ref{thm:cycle}). The phase-optimised basis yields $\delta_n^{\rm (ph)}<\tfrac12$ for all $n$, but still not a single \emph{uniform} $\delta<\tfrac12$ independent of $n$.

\medskip
\noindent\emph{Open problem.} Is it possible to construct, for each $n$, a real orthogonal $Q$ with $q_0=\1/\sqrt n$ and $M(Q)\le 2-\varepsilon$ for some $\varepsilon>0$ independent of $n$. By Lemma~\ref{lem:coh}, this would give a universal $\delta<\tfrac12$ for all (not necessarily normalised) Sule\u{\i}manova lists.

\appendix

\section*{Appendix: Values and visualisation of the phase-optimised bound}

Using Theorem~\ref{thm:phase-correct}, concrete values (rounded) are:

\begin{center}
\begin{tabular}{r r @{\qquad} r r @{\qquad} r r @{\qquad} r r}
\hline
$n$ & $\delta^{\mathrm{(ph)}}_n$ & $n$ & $\delta^{\mathrm{(ph)}}_n$ & $n$ & $\delta^{\mathrm{(ph)}}_n$ & $n$ & $\delta^{\mathrm{(ph)}}_n$ \\
\hline
3  & 0.4641016151 & 4  & 0                & 5  & 0.4874571845 & 6  & 0.4641016151 \\
7  & 0.4936524043 & 8  & 0.4142135624     & 9  & 0.4961728669 & 10 & 0.4874571845 \\
12 & 0.4641016151 & 16 & 0.4802169351     & 20 & 0.4874571845 & 24 & 0.4913338099 \\
30 & 0.4986267123 & 40 & 0.4969030207     & 60 & 0.4986267123 & 100 & 0.4995061949 \\
\hline
\end{tabular}
\end{center}

\begin{figure}[t]
\centering
\begin{tikzpicture}
\begin{axis}[
  width=0.9\textwidth,
  height=7cm,
  xlabel={$n$},
  ylabel={$\,\delta^{\mathrm{(ph)}}_n$},
  xmin=3, xmax=60,
  ymin=0, ymax=0.52,
  grid=both,
  legend pos=south east,
  ytick distance=0.1,
  xtick distance=5
]
\addplot+[no marks, thick, domain=4:60, samples=300] {1 - 1/(2*(cos(180/x))^2)};
\addlegendentry{$n\equiv 0\pmod{4}$: $1 - \dfrac{1}{2\cos^2(\pi/n)}$}
\addplot+[no marks, thick, domain=6:60, samples=300, dashed] {1 - 1/(2*(cos(90/x))^2)};
\addlegendentry{$n\equiv 2\pmod{4}$: $1 - \dfrac{1}{2\cos^2(\pi/(2n))}$}
\addplot+[no marks, thick, domain=3:60, samples=300, densely dotted] {1 - 1/(2*(cos(45/x))^2)};
\addlegendentry{$n$ odd: $1 - \dfrac{1}{2\cos^2(\pi/(4n))}$}
\addplot+[only marks] coordinates {(3,0.4641016151) (4,0) (5,0.4874571845) (6,0.4641016151) (7,0.4936524043)
(8,0.4142135624) (9,0.4961728669) (10,0.4874571845) (12,0.4641016151) (16,0.4802169351) (20,0.4874571845) (24,0.4913338099)};
\addlegendentry{sample $n$}
\end{axis}
\end{tikzpicture}
\caption{Phase-optimised threshold $\delta^{\mathrm{(ph)}}_n$ by congruence class. Curves are envelopes; only integer $n$ on the labelled classes apply.}
\end{figure}
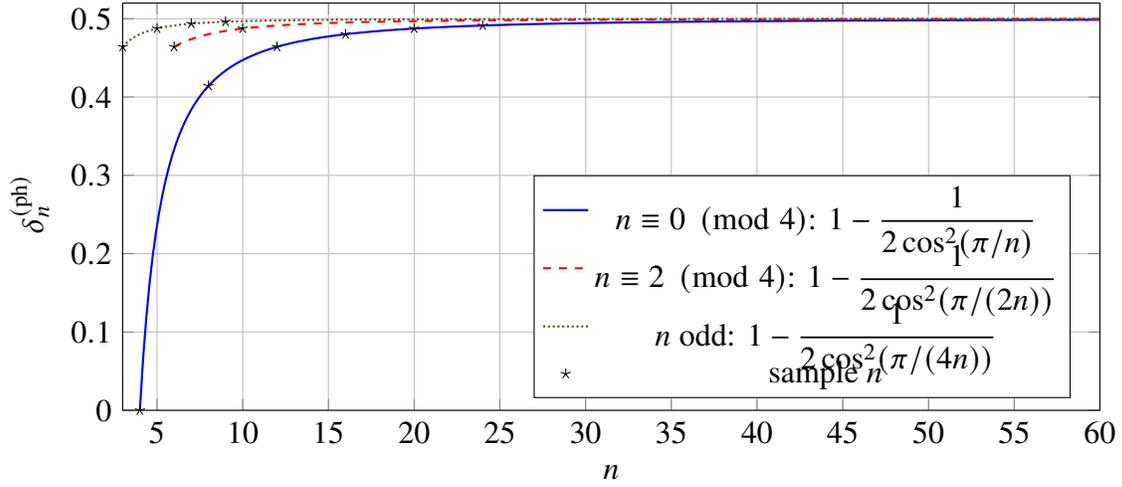

\newpage

\end{document}